\numberwithin{equation}{section}
\newtheorem{theorem}{Theorem}[section]
\newtheorem{defi}{Definition}[section]
\newtheorem{lemma}{Lemma}[section]
\newcommand{\N}{\mathbb{N}}
\newcommand{\Z}{\mathbb{Z}}
\newcommand{\Q}{\mathbb{Q}}
\newcommand{\G}{{G}}
\newcommand{\A}{\mathcal{A}}
\newcommand{\B}{\mathcal{B}}
\newcommand{\C}{\mathcal{C}}
\newcommand{\F}{\mathcal{F}}
\newcommand{\g}{\mathcal{G}}
\newcommand{\s}{\mathcal{S}}
\title[Decompositions of sets of integers]{On additive and multiplicative decompositions of sets of integers\\ with restricted prime factors, I.\\ (Smooth numbers.)}
\author[K. Gy\H{o}ry, L. Hajdu and A. S\'ark\"ozy]{K. Gy\H{o}ry, L. Hajdu and A. S\'ark\"ozy}
\thanks{Research supported in part by the NKFIH grants K115479, K119528, K128088, and K130909, and by the projects EFOP-3.6.1-16-2016-00022 and EFOP-3.6.2-16-2017-00015 of the European Union, co-financed by the European Social Fund.}
\subjclass[2010]{11P45, 11P70}
\keywords{Additive decompositions, multiplicative decompositions, smooth (friable) numbers, $S$-unit equations}
\address{K. Gy\H{o}ry
\hfill\break\indent L. Hajdu
\hfill\break\indent University of Debrecen, Institute of Mathematics
\hfill\break\indent H-4002 Debrecen, P.O. Box 400.
\hfill\break\indent Hungary}
\email{gyory@science.unideb.hu}
\email{hajdul@science.unideb.hu}
\address{A. S\'ark\"ozy
\hfill\break\indent E\"otv\"os Lor\'and University, Institute of Mathematics
\hfill\break\indent H-1117 Budapest, P\'azm\'any P\'eter s\'et\'any 1/C
\hfill\break\indent Hungary}
\email{sarkozy@cs.elte.hu}
\begin{document}

\begin{abstract}
In \cite{sa} the third author of this paper presented two conjectures on the additive decomposability of the sequence of ''smooth'' (or ''friable'') numbers. Elsholtz and Harper \cite{eh} proved (by using sieve methods) the second (less demanding) conjecture. The goal of this paper is to extend and sharpen their result in three directions by using a different approach (based on the theory of $S$-unit equations).
\end{abstract}

\maketitle

\section{Introduction}

$\A,\B,\C,\hdots$ denote (usually infinite) sets of non-negative integers, and their counting functions are denoted by $A(X),B(X),C(X),\hdots$ so that e.g.
$$
A(X)=|\{a:a\leq X,\ a\in\A\}|.
$$
The set of the positive integers is denoted by $\N$, and we write $\N\cup\{0\}=\N_0$. The set of rational numbers is denoted by $\Q$.

We will need
\begin{defi}
\label{defi1}
Let $\G$ be an {\rm additive} semigroup and $\A,\B,\C$ subsets of $\G$ with
\begin{equation}
\label{eq11}
|\B|\geq 2,\ \ \ |\C|\geq 2.
\end{equation}
If
\begin{equation}
\label{eq12}
\A=\B+\C\ (=\{b+c:b\in\B,\ c\in\C\})
\end{equation}
then \eqref{eq12} is called an {\rm additive decomposition} or briefly {\rm a-decomposition} of $\A$, while if a {\rm multiplication} is defined in $\G$ and \eqref{eq11} and
\begin{equation}
\label{eq13}
\A=\B\cdot\C\ (=\{bc:b\in\B,\ c\in\C\})
\end{equation}
hold then \eqref{eq13} is called a {\rm multiplicative decomposition} or briefly {\rm m-decomposition} of $\A$.
\end{defi}

In \cite{ost1} and \cite{ost2} H.-H. Ostmann introduced some definitions concerning additive properties of sequences of non-negative integers and studied some related problems. The most interesting definitions are:

\begin{defi}
\label{defi2}
A finite or infinite set $\A$ of non-negative integers is said to be {\rm a-reducible} if it has an {\rm additive decomposition}
$$
\A=\B+\C\ \ \ \text{with}\ \ \ |\B|\geq 2,\ |\C|\geq 2
$$
(where $\B\subset\N_0$, $\C\subset\N_0$). If there are no sets $\B,\C$ with these properties then $\A$ is said to be {\rm a-primitive} or {\rm a-irreducible}.
\end{defi}

(More precisely, Ostmann used the terminology "reducible", "primitive", "irreducible" without the prefix a-. However, since we will study both additive properties and their multiplicative analogs thus to distinguish between them we will use a prefix a- in the additive case and a prefix m- in the multiplicative case.)

\begin{defi}
\label{defi3}
Two sets $\A,\B$ of non-negative integers are said to be {\rm asymptotically equal} if there is a number $K$ such that $\A\cap[K,+\infty)=\B\cap [K,+\infty)$ and then we write $\A\sim\B$.
\end{defi}

\begin{defi}
\label{defi4}
An infinite set $\A$ of non-negative integers is said to be {\rm totally a-primitive} if every $\A'$ with $\A'\subset \N_0$, $\A'\sim\A$ is {\rm a-primitive}.
\end{defi}

The multiplicative analogs of Definitions \ref{defi2} and \ref{defi4} are:

\begin{defi}
\label{defi5}
If $\A$ is an infinite set of {\rm positive} integers then it is said to be {\rm m-reducible} if it has a {\rm multiplicative decomposition}
$$
\A=\B\cdot\C\ \ \ \text{with}\ \ \ |\B|\geq 2,\ |\C|\geq 2
$$
(where $\B\subset\N$, $\C\subset\N$). If there are no such sets $\B,\C$ then $\A$ is said to be {\rm m-primitive} or {\rm m-irreducible}.
\end{defi}

\begin{defi}
\label{defi6}
An infinite set $\A\subset\N$ is said to be {\rm totally m-primitive} if every $\A'\subset \N$ with $\A'\sim\A$ is {\rm m-primitive}.
\end{defi}

Many papers have been written on the existence or non-existence of a-decompositions and m-decompositions, resp., of certain special sequences; surveys of results of this type are presented in \cite{el, eh, hs1, hs3}. In \cite{sa} the third author of this paper presented two related conjectures (we adjust the original notation and terminology to match better to the ones used by Elsholtz and Harper who have proved related results in \cite{eh} later):

\begin{defi}
\label{defi7}
Denote the greatest prime factor of the positive integer $n$ by $p^+(n)$. Then $n$ is said to be smooth (or friable) if $p^+(n)$ is ''small'' in terms of $n$. More precisely, if $y=y(n)$ is a monotone increasing function on $\N$ assuming positive values and $n\in\N$ is such that $p^+(n)\leq y(n)$, then we say that $n$ is $y$-smooth, and we write $\F_y$ ($\F$ for ''friable'') for the set of all $y$-smooth positive integers.
\end{defi}

We quote \cite{sa} (using a slightly different notation):

\vskip.2cm

\noindent
''{\bf Conjecture A.} {\sl If $0<\varepsilon<1$,
$$
y(n)=n^\varepsilon,
$$
the set $\F_y\subset\N$ is defined by
$$
\F_y=\{n:n\in\N, p^+(n)\leq y(n)=n^\varepsilon\}
$$
and $\F_y'\subset\N$ is a set such that
$$
\F_y'\sim \F_y,
$$
then there are no sets $\A,\B\subset\N$ with $|\A|,|\B|\geq 2$ and
$$
\A+\B=\F_y'.
$$
}

\noindent
(...) this seems to be very difficult, but, perhaps, the ternary version of the problem can be settled:

\vskip.2cm

\noindent
{\bf Conjecture B.} {\sl If $\F_y$ and $\F_y'$ are defined as in Conjecture A, then there are no $\A,\B,\C\in\N$ with $|\A|,|\B|,|\C|\geq 2$ and
$$
\A+\B+\C=\F_y'.''
$$
}

Elsholtz and Harper (see Corollary 2.2 in \cite{eh}) proved Conjecture B for all small $\varepsilon>0$:

\vskip.2cm

\noindent
{\bf Theorem A.} {\sl There exists a large absolute constant $D>0$, and a small absolute constant $\kappa>0$, such that the following is true. Suppose $y(n)$ is an increasing function such that
\begin{equation}
\label{eq14}
(\log n)^D\leq y(n)\leq n^\kappa\ \ \ \text{\sl for large}\ n,
\end{equation}
and such that
$$
y(2n)\leq y(n)(1+(100\log y(n))/\log n).
$$
Then a ternary decomposition
$$
\A+\B+\C\sim\F_y',
$$
where $\A,\B$ and $\C$ contain at least two elements each, does not exist.}

\vskip.2cm

(This proves Conjecture B for $0<\varepsilon\leq\kappa$.)

In \cite{eh} first they proved ''an additive irreducibility theorem for sets that need not be well controlled by the sieve'' and then they deduced Theorem A from this theorem. In this paper our goal is to extend and sharpen their result in three directions: we will consider the decomposability of sets $\F_y'\sim\F_y$ with $y(n)$ smaller than the lower bound in \eqref{eq14}; in this case we will be able to also attack the more difficult problem of binary decomposition considered in Conjecture A; we will also study the multiplicative analog of the problem. While in \cite{eh} mostly sieve methods are used, here we will apply a completely different approach, namely, the crucial tool used by us will be the theory of $S$-unit equations.

Here we will prove the following two theorems:

\begin{theorem}
\label{thm1}
If $y(n)$ is an increasing function with $y(n)\to\infty$ and
\begin{equation}
\label{eq15}
y(n)<2^{-32}\log n\ \ \ \text{for large}\ n,
\end{equation}
then the set $\F_y$ is totally {\rm a-primitive}.
\end{theorem}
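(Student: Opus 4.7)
Assume for contradiction that some $\mathcal{A}' \sim \mathcal{F}_y$ admits an a-decomposition $\mathcal{A}' = \mathcal{B} + \mathcal{C}$ with $|\mathcal{B}|,|\mathcal{C}| \geq 2$. Fix $b_1 \neq b_2$ in $\mathcal{B}$ and $c_1 \neq c_2$ in $\mathcal{C}$, set $d = b_1 - b_2$ and $d' = c_1 - c_2$, and choose $K_0$ so that $\mathcal{A}' \cap [K_0,\infty) = \mathcal{F}_y \cap [K_0,\infty)$. For every sufficiently large $c \in \mathcal{C}$, both $u = b_1+c$ and $v = b_2+c$ lie in $\mathcal{F}_y$ and are $y$-smooth with $u-v=d$; symmetrically, for every sufficiently large $b \in \mathcal{B}$ the pair $(b+c_1,b+c_2)$ consists of two $y$-smooth integers differing by $d'$. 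For a large parameter $X$, set $S = \{p \text{ prime} : p \leq y(X) \text{ or } p \mid dd'\} \cup \{\infty\}$; then $(u/d,-v/d)$ solves the $S$-unit equation $x+y=1$ in $\mathbb{Q}^*$, so by Evertse's quantitative theorem the number of such solutions is at most $3 \cdot 7^{1+2|S|}$. Since $|S| \leq \pi(y(X)) + O_{d,d'}(1)$ and distinct $c$'s (resp.\ $b$'s) produce distinct pairs, this yields $|\mathcal{C}(X)|,\; |\mathcal{B}(X)| \leq C_0 \cdot 49^{\pi(y(X))}$ for some $C_0 = C_0(d,d')$, and hence
\begin{equation}\label{eq:upperbound}
|\mathcal{A}'(X)| \;\leq\; |\mathcal{B}(X)| \cdot |\mathcal{C}(X)| \;\leq\; C_1 \cdot 49^{2\pi(y(X))}.
\end{equation}

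To contradict \eqref{eq:upperbound}, one needs $F_y(X)$ to be much larger. Since $y$ is increasing with $y(n) \to \infty$, every $y(X)$-smooth integer $n \geq y^{-1}(y(X))$ satisfies $p^+(n) \leq y(X) \leq y(n)$, so $n \in \mathcal{F}_y$. A box count on products $n = p_1^{e_1} \cdots p_k^{e_k}$ with $k = \pi(y(X))$ and each $e_i \leq \lfloor \log X / \theta(y(X)) \rfloor$, where $\theta(Y) = \sum_{p \leq Y} \log p \sim Y$, gives
\[
\Psi(X, y(X)) \;\geq\; \Bigl(\tfrac{\log X}{y(X)}\Bigr)^{\pi(y(X))} \;>\; 2^{32\pi(y(X))},
\]
the last inequality coming from the hypothesis $y(X) < 2^{-32} \log X$, which forces $\log X / y(X) > 2^{32}$. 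A refinement — partitioning $[1,X]$ along the plateaus of $y$ and restricting on each plateau to smooth integers in its upper portion, where $n \geq y^{-1}(y(n))$ is automatic — transfers this into
\begin{equation}\label{eq:lowerbound}
F_y(X) \;\gg\; \frac{2^{32\pi(y(X))}}{(\log X)^{O(1)}}.
\end{equation}

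Combining \eqref{eq:upperbound} and \eqref{eq:lowerbound} via $F_y(X) = |\mathcal{A}'(X)| + O(1)$ forces $2^{32\pi(y(X))} \ll (\log X)^{O(1)} \cdot 49^{2\pi(y(X))}$. But $49^2 < 2^{12}$, so the left side enjoys a factor $2^{(32-12)\pi(y(X))} = 2^{20\pi(y(X))}$ of slack, which dominates any $(\log X)^{O(1)}$ as soon as $\pi(y(X)) \gg \log\log X$; and this holds for all sufficiently large $X$ because $y(n) \to \infty$ implies $\pi(y(X)) \to \infty$. This contradiction completes the proof. The principal technical difficulty is \eqref{eq:lowerbound}: the upper bound via Evertse and the basic box estimate for $\Psi(X,y(X))$ are essentially immediate, but if $y$ has a sharp jump near $X$, then $y^{-1}(y(X))$ can be close to $X$ itself and the naive inequality $F_y(X) \geq \Psi(X,y(X)) - y^{-1}(y(X))$ becomes useless. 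Handling this requires working plateau-by-plateau with a slightly reduced smoothness threshold $Y = y(M)$, $M \leq X/2$, and summing the contributions. The specific constant $2^{-32}$ in the hypothesis is calibrated exactly so that $2^{32}$ exceeds $49^2$ by the factor $2^{20}$, supplying the slack needed to absorb the $(\log X)^{O(1)}$ losses from this analysis.
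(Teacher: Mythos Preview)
Your overall strategy coincides with the paper's: fix two elements of one summand, observe that each element of the other summand produces a solution of a fixed $S$-unit equation with $S$ the primes up to $y(X)$, invoke a uniform bound on the number of such solutions, and play this off against a lower bound for the size of $\mathcal{F}_y$ coming from smooth-number counts. Your variant of bounding \emph{both} $\mathcal{B}(X)$ and $\mathcal{C}(X)$ separately (and using Evertse's $3\cdot 7^{1+2|S|}$ rather than Beukers--Schlickewei) is a legitimate simplification and even yields a tighter upper bound; likewise the elementary box count is a reasonable substitute for de Bruijn's formula. So the architecture is sound.

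However, the closing sentence contains a genuine gap. You write that $2^{20\pi(y(X))}$ dominates $(\log X)^{O(1)}$ ``as soon as $\pi(y(X)) \gg \log\log X$; and this holds for all sufficiently large $X$ because $y(n)\to\infty$ implies $\pi(y(X))\to\infty$.'' But $\pi(y(X))\to\infty$ is \emph{far} weaker than $\pi(y(X))\gg\log\log X$: nothing in the hypotheses prevents, say, $y(X)\le\log\log X$, in which case $\pi(y(X))=o(\log\log X)$ and $2^{20\pi(y(X))}=(\log X)^{o(1)}$ cannot absorb any genuine $(\log X)^{O(1)}$ loss. The paper handles exactly this regime by a separate, elementary argument (its Case~1): when $y(N)\le\log\log N$ the $S$-unit bound is $(\log N)^{o(1)}$ while already the powers of~$2$ give $F_y(N)\gg\log N$, so no refined lower bound is needed. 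Your argument as written omits this case split and therefore does not cover slowly growing $y$.

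A related issue is that your inequality \eqref{eq:lowerbound} is only sketched. You correctly identify the difficulty in passing from $\Psi(X,y(X))$ to $F_y(X)$ when $y$ may jump, and the plateau idea is plausible, but you do not actually carry it out; in particular you do not quantify the $(\log X)^{O(1)}$ loss or show it is compatible with the case $y(X)\le\log\log X$. (Incidentally, the constant $2^{-32}$ is not ``calibrated'' to give you $2^{20}$ of slack against $49^2$; it is calibrated in the paper against the Beukers--Schlickewei constant $2^{16s}$ after squaring, where the margin is only $\log(1+2^{32})$ versus $\log 2^{32}$. With Evertse's sharper bound you happen to have extra room, but that is not the origin of the hypothesis.)
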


(If $y(n)$ is increasing then the set $\F_y$ is m-reducible since $\F_y=\F_y\cdot\F_y$, and we also have $\F_y\sim\F_y\cdot\{1,2\}$, thus if we want to prove an {\sl m-primitivity} theorem involving $\F_y$ then we have to switch from $\F_y$ to the shifted set
\begin{equation}
\label{eq16}
\g_y:=\F_y+\{1\}.
\end{equation}
See also \cite{el}.)

\begin{theorem}
\label{thm2}
If $y(n)$ is defined as in Theorem \ref{thm1}, then the set $\g_y$ is totally {\sl m-primitive}.
\end{theorem}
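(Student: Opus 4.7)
I would argue by contradiction via the theory of $S$-unit equations, following the approach announced by the authors. Suppose $\g_y$ were not totally m-primitive, so there is $\g_y'\sim\g_y$ with $\g_y'=\B\cdot\C$ and $|\B|,|\C|\geq 2$. Fix distinct $b_1<b_2$ in $\B$ and distinct $c_1<c_2$ in $\C$, and let $K$ be such that $\g_y'\cap[K,\infty)=\g_y\cap[K,\infty)$.

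The key observation is that for each $c\in\C$ with $b_1c\geq K$, both $b_1c$ and $b_2c$ lie in $\g_y=\F_y+\{1\}$, so $u:=b_1c-1$ and $v:=b_2c-1$ are $y$-smooth integers satisfying
$$
b_2u-b_1v=b_1-b_2.
$$
After dividing by $b_1-b_2$ this becomes $X+Y=1$ with $X=b_2u/(b_1-b_2)$ and $Y=-b_1v/(b_1-b_2)$, both $S$-units in $\Q^*$ for $S$ the union of the primes dividing $uv$ with the finitely many primes of $b_1b_2(b_1-b_2)$. Applying the Beukers--Schlickewei theorem ($\leq 2^{16(|S|+2)}$ solutions over $\Q$), and noting that for $c\leq T$ the primes in $S$ are bounded by $y(b_2T)<2^{-32}\log(b_2T)<2^{-31}\log T$ for $T$ large, one obtains $|\C(T)|\leq T^{\delta}$ for a small explicit $\delta$ determined by the constant $2^{-32}$ in the hypothesis \eqref{eq15}. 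Interchanging the roles of $\B$ and $\C$ (i.e., fixing $c_1,c_2$ and varying $b\in\B$) gives $|\B(T)|\leq T^{\delta}$, and therefore
$$
|\g_y'(T)|\leq|\B(T)|\cdot|\C(T)|\leq T^{2\delta}.
$$

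To close the argument I must contradict this bound with a matching lower estimate on $|\g_y'(T)|=|\F_y(T-1)|+O(1)$ coming from $y(n)\to\infty$: for each fixed $y_0$, for large $n$ we have $y(n)\geq y_0$, giving $|\F_y(T)|\geq\Psi(T,y_0)-O(1)\asymp(\log T)^{\pi(y_0)}$; choosing $y_0$ as a slowly growing function of $T$ then forces $|\F_y(T)|>T^{2\delta}$ for infinitely many $T$, the desired contradiction. The principal obstacle -- and the reason for the explicit constant $2^{-32}$ in \eqref{eq15} -- is this quantitative matching: the exponent $\delta$ produced by Beukers--Schlickewei must be small enough that the $\Psi$-function lower bound (available only through the unspecified rate at which $y(n)\to\infty$) can overtake it, and one must choose $y_0=y_0(T)$ carefully to exploit the slack between $2^{-32}\log T$ and what the $S$-unit count controls. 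This delicate balance between the $S$-unit bound and the counting of $y$-smooth numbers is where the hypothesis constant enters decisively.
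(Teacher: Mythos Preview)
Your overall strategy---set up an $S$-unit equation from two fixed elements of one factor and the varying element of the other, apply Beukers--Schlickewei, and play the resulting upper bound off against a smooth-number count---is exactly the paper's approach. Your choice to bound \emph{both} $|\B(T)|$ and $|\C(T)|$ by separate $S$-unit arguments is in fact a minor simplification: the paper instead proves that for infinitely many $D$ one has $A(mD)B(mD)<(m^2+1)A(D)B(D)$ (with $m=\max(a_2,b_2)$), deduces that one factor satisfies $B(D)\gg\Psi(mD,y(mD))^{1/2}$, and bounds only that factor via the $S$-unit count. Your symmetric version avoids this growth-control step at no real cost, since $y(b_2T)<2^{-32}\log(b_2T)=2^{-32}\log T+O(1)$.

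There is, however, a genuine gap in your final ``quantitative matching''. Your upper bound gives roughly $\log\bigl(B(T)C(T)\bigr)\le 32\log 2\cdot\pi(y(mT))+O(1)\sim \log 2^{32}\cdot\dfrac{y(mT)}{\log y(mT)}$. Your proposed lower bound, via $\Psi(T,y_0)\asymp(\log T)^{\pi(y_0)}$ for a fixed $y_0$ and then letting $y_0$ grow, does not work: the asymptotic $(\log T)^{\pi(y_0)}$ is only valid for $y_0$ fixed, and the implied $O(1)$ (coming from the threshold where $y(n)\ge y_0$) depends on $y_0$ in an uncontrolled way. More importantly, it is not the rate at which $y(n)\to\infty$ that supplies the lower bound; both sides of the inequality scale like $y(N)/\log y(N)$, and what decides the contradiction is the comparison of \emph{constants}. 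The paper invokes de Bruijn's theorem (their Lemma~2), which for $y(N)<2^{-32}\log N$ yields
\[
\log\Psi(N,y(N))\ge (1+o(1))\,\frac{y(N)}{\log y(N)}\,\log\!\bigl(1+2^{32}\bigr),
\]
and the contradiction is precisely that $\log(1+2^{32})>\log 2^{32}$. Without de Bruijn (or an equivalent uniform estimate in the range $y\asymp\log x$), you cannot close the argument, and this is exactly where the hypothesis constant $2^{-32}$ enters. Your last paragraph correctly senses that this is the crux, but the tool you reach for is not sharp enough to resolve it.
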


(While in part II of this paper we will present further closely related results.)

\section{Proof of Theorem \ref{thm1}}

Assume that contrary to the statement of the theorem, the function $y=y(n)$ satisfies the assumptions in Theorem \ref{thm1}, however, the set $\F_y$ is {\sl not} totally a-primitive. Then there are $\F_y'\subset \N_0$, $n_0\in\N$, $A\subset \N_0$, $\B\subset\N_0$ such that
\begin{equation}
\label{eq21}
\F_y'\cap [n_0,+\infty)=\F_y\cap [n_0,+\infty),
\end{equation}
\begin{equation}
\label{eq22}
|\A|\geq 2,\ |\B|\geq 2
\end{equation}
and
\begin{equation}
\label{eq23}
\F_y'=\A+\B.
\end{equation}
Let $N$ denote a positive integer with
\begin{equation}
\label{eq24}
N\to+\infty.
\end{equation}
It follows from \eqref{eq21} and \eqref{eq23} that
$$
\F_y\cap [n_0,N]=\F_y'\cap [n_0,N]=(\A+\B)\cap [n_0,N]\subset(\A\cap [0,N])+(\B\cap [0,N])
$$
whence
\begin{multline}
\label{eq25}
|\F_y\cap [n_0,N]|\leq |(\A\cap [0,N])+(\B\cap [0,N])|\leq\\
\leq |(\A\cap [0,N])|\cdot |(\B\cap [0,N])|=A(N)B(N).
\end{multline}
On the other hand, using the standard notation
$$
\Psi(x,y)=|\{n:n\leq x,\ p^+(n)\leq y\}|,
$$
for $N\to +\infty$ we have
\begin{multline}
\label{eq26}
|\F_y\cap [n_0,N]|=|\F_y\cap (0,N]|-|\F_y\cap (0,n_0)|\geq\\
\geq\Psi(N,y(N))-n_0=(1+o(1))\Psi(N,y(N))
\end{multline}
since clearly
\begin{equation}
\label{eq27}
\Psi(x,y)\to +\infty\ \text{for}\ 2\leq y\leq x,\ x\to +\infty.
\end{equation}
By \eqref{eq25} and \eqref{eq26}, for large enough $N$ we have
$$
\max(A(N),B(N))>{\frac{1}{2}} (\Psi(N,y(N)))^{1/2}.
$$
Thus either
$$
A(N)>{\frac{1}{2}} (\Psi(N,y(N)))^{1/2}
$$
or
\begin{equation}
\label{eq28}
B(N)>{\frac{1}{2}} (\Psi(N,y(N)))^{1/2}
\end{equation}
holds for infinitely many $N$; since $\A$ and $\B$ play symmetric roles thus we may assume that \eqref{eq28} does.

Write $\A=\{a_1,a_2,\dots\}$ with $(0\leq)a_1<a_2<\dots$ and
\begin{equation}
\label{eq29}
\tilde{\B}_N=\{b:b\in\B,\ n_0-a_1\leq b\leq N-a_2\}.
\end{equation}
Then by \eqref{eq27}, for all large enough $N$ satisfying \eqref{eq28} we have
\begin{multline}
\label{eq210}
|\tilde{\B}_N|=|(\B\cap [0,N])\setminus\left((\B\cap [0,n_0-a_1))\cup (\B\cap (N-a_2,N])\right)|\geq\\
\geq |\B\cap [0,N]|-|\B\cap [0,n_0-a_1)|-|\B\cap (N-a_2,N]|\geq\\
\geq B(N)-n_0-a_2>{\frac{1}{3}} (\Psi(N,y(N)))^{1/2}.
\end{multline}

We will need the notion of $S$-unit equations and a result on the number of solutions of them. For their formulation, we introduce some notation. Let $(0<)p_1<p_2<\dots<p_s$ be prime numbers, write $\s=\{p_1,p_2,\dots,p_s\}$ and let
$$
\Z_\s=\left\{\frac{a}{b}:a,b\in\Z,\ b\neq 0,\ (a,b)=1,\ p\mid b\implies p\in\s\right\}
$$
be the set of {\sl $\s$-integers}. Then the units of the ring $\Z_\s$, that is the set of {\sl $\s$-units} is given by 
\begin{equation}
\label{eq211}
\Z_\s^*=\left\{\frac{a}{b}:a,b\in\Z,\ ab\neq 0,\ (a,b)=1,\ p\mid ab\implies p\in\s\right\}.
\end{equation}

\begin{lemma}
\label{lem1}
If $U\in\Q$, $V\in\Q$ and $UV\neq 0$ then the {\rm $S$-unit equation}
\begin{equation}
\label{eq212}
UX+VY=1,\ X,Y\in\Z_\s^*
\end{equation}
has at most $2^{8(2s+2)}$ solutions.
\end{lemma}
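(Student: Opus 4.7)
The plan is to reduce the equation $UX+VY=1$ to the standard unit equation $x+y=1$, and then invoke a Beukers--Schlickewei or Evertse type bound on the number of solutions of such an equation, which depends only on the rank of the ambient multiplicative group and is uniform in the coefficients.

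First I would substitute $x=UX$, $y=VY$. Since $UV\neq 0$, this is a bijection between solutions $(X,Y)$ of $UX+VY=1$ with $X,Y\in\Z_\s^*$ and pairs $(x,y)$ with $x+y=1$ and $(x,y)\in (U\Z_\s^*)\times(V\Z_\s^*)$. Both $x$ and $y$ then lie in the finitely generated multiplicative subgroup $\Gamma\subset\Q^*$ generated by $-1$, the primes $p_1,\dots,p_s$, and the numerators and denominators of $U$ and $V$; this $\Gamma$ has rank at most $s+2$.

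Next I would invoke the key input from the theory of $S$-unit equations: for any finitely generated subgroup $\Gamma\subset\Q^*$ of rank $r$, the equation $x+y=1$ with $x,y\in\Gamma$ admits at most $2^{16(r+1)}$ solutions (Beukers--Schlickewei). Applied naively with $r\leq s+2$ this already gives a bound of the shape $2^{16(s+3)}$, which is off from the target $2^{8(2s+2)}=2^{16(s+1)}$ by a factor absorbed by the coefficients $U,V$.

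The main obstacle, and the only delicate point, is matching the precise constant $2^{8(2s+2)}$ without paying a rank penalty of $2$ for $U$ and $V$. This is exactly what the stronger form of the result (in the spirit of Evertse) provides: the number of solutions of $UX+VY=1$ with $X,Y$ in a rank-$s$ subgroup of $\Q^*$ is bounded purely in terms of $s$, independently of $U$ and $V$. One proves this by fixing one solution $(X_0,Y_0)$, setting $\xi=X/X_0$, $\eta=Y/Y_0$ so that $\xi,\eta\in\Z_\s^*$ still lie in a rank-$s$ group, and showing that the derived relation on $(\xi,\eta)$ reduces to a bounded number of ordinary unit equations in $\Z_\s^*$ to which Beukers--Schlickewei applies directly. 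Carrying through the constants (rank $s$, factor $2^{16}$ per dimension of rank) yields precisely the bound $2^{16(s+1)}=2^{8(2s+2)}$ claimed in the lemma.
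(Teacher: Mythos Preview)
Your approach coincides with the paper's: the lemma is quoted as a direct consequence of the Beukers--Schlickewei theorem, and the paper's entire proof is a one-line citation (to Corollary~6.1.5 in Evertse--Gy\H{o}ry, which already packages the result in the form $a_1x_1+a_2x_2=1$ with the bound depending only on $|S|$).

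One caveat: the device you sketch in your final paragraph to recover the exact constant does not work as written. Substituting $\xi=X/X_0$, $\eta=Y/Y_0$ turns $UX+VY=1$ into $(UX_0)\xi+(VY_0)\eta=1$, an equation of exactly the same shape with new nonzero coefficients, so nothing has been reduced and you are back where you started. Fortunately this step is unnecessary: the uniformity in $U,V$ is part of the Beukers--Schlickewei theorem itself (equivalently, their bound applies to cosets of a rank-$r$ subgroup of $(\Q^*)^2$, not just to subgroups). Over $\Q$ with the $s$ finite primes in $\s$ together with the archimedean place one has $|S|=s+1$, and the cited corollary gives the bound $2^{8\cdot 2|S|}=2^{8(2s+2)}$ directly, with no further argument needed.
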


\begin{proof} This assertion is a consequence of a theorem of Beukers and Schlickewei \cite{bs}; see Corollary 6.1.5 of Evertse and Gy\H{o}ry \cite{eg}, p.133.
\end{proof}

We will apply this lemma later with
\begin{equation}
\label{eq213}
\s=\{p:p\ \text{prime},\ p\leq y\}=\{p_1,p_2,\dots,p_{\pi(y)}\}
\end{equation}
(where $p_1<p_2<\dots<p_{\pi(y)}$ are the first $\pi(y)$ primes) so that now
\begin{equation}
\label{eq214}
s=|\s|=\pi(y)=\pi(y(N)).
\end{equation}
Consider now any
\begin{equation}
\label{eq215}
b\in \tilde{\B}_N,
\end{equation}
and write
\begin{equation}
\label{eq216}
X_b=a_2+b,\ Y_b=a_1+b.
\end{equation}
Then we have
$$
X_b-Y_b=a_2-a_1
$$
whence
\begin{equation}
\label{eq217}
\frac{1}{a_2-a_1}X_b-\frac{1}{a_2-a_1}Y_b=1.
\end{equation}
By \eqref{eq215} and \eqref{eq216} for all $b\in\tilde{\B}_N$ we have
\begin{multline}
\label{eq218}
n_0=a_1+(n-a_1)\leq a_1+b=Y_b<\\
<a_2+b=X_b\leq a_2+(N-a_2)=N,
\end{multline}
and by \eqref{eq23} we also have
\begin{equation}
\label{eq219}
a_1+b=Y_b\in\F_y'\ \text{and}\ a_2+b=X_b\in\F_y'.
\end{equation}
It follows from \eqref{eq21}, \eqref{eq218} and \eqref{eq219} that
$$
X_b,Y_b\in [n_0,N]\cap\F_y'=[n_0,N]\cap\F_y,
$$
thus $X_b,Y_b$ are composed from the primes not exceeding $y=y(N)$, i.e. from the set $\s$ defined in \eqref{eq213}, so that
\begin{equation}
\label{eq220}
X_b,Y_b\in\Z_\s^*
\end{equation}
(for the $\Z_\s^*$ defined in \eqref{eq211}). Writing $U=\frac{1}{a_2-a_1}$, $V=-\frac{1}{a_2-a_1}$, we have $U,V\in\Q$, thus
\begin{equation}
\label{eq221}
UX+VY=1,\ X,Y\in\Z_\s^*
\end{equation}
is an $S$-unit equation, and by \eqref{eq217} and \eqref{eq220} for every $b$ satisfying \eqref{eq215}, $X=X_b$, $Y=Y_b$ is a solution of this equation. It follows by \eqref{eq210} that the number $M$ of solutions of this equation satisfies
\begin{equation}
\label{eq222}
M\geq |\tilde{\B}_N|> \frac{1}{3} (\Psi(N,y(N)))^{1/2}.
\end{equation}

On the other hand, by Lemma \ref{lem1} and \eqref{eq214} we have
\begin{equation}
\label{eq223}
M\leq 2^{8(2s+2)}=2^{8(2\pi(y)+2)}.
\end{equation}
By \eqref{eq222} and \eqref{eq223} we have
\begin{equation}
\label{eq224}
\frac{1}{3} (\Psi(N,y(N)))^{1/2}<2^{8(2\pi(y)+2)}.
\end{equation}

Now we have to distinguish two cases.

CASE 1. Assume first that
\begin{equation}
\label{eq225}
2\leq y=y(N)\leq \log\log N.
\end{equation}
Then clearly we have
\begin{multline*}
\Psi(N,y(N))\geq \Psi(N,2)=|\{k\in\N_0,\ 2^k\leq N\}|=\\
=\left[\frac{\log N}{\log 2}\right]+1>\frac{\log N}{\log 2}>\log N
\end{multline*}
whence
\begin{equation}
\label{eq226}
\frac{1}{3} (\Psi(N,y(N)))^{1/2}>\frac{1}{3} (\log N)^{1/2}.
\end{equation}
On the other hand, by \eqref{eq225} we have
\begin{equation}
\label{eq227}
2^{8(2\pi(y)+2)}\leq 2^{8(2\pi(\log\log N)+2)}=2^{o(\log\log N)}=(\log N)^{o(1)}.
\end{equation}
\eqref{eq226} and \eqref{eq227} contradict \eqref{eq224}.

CASE 2. Assume now that
\begin{equation}
\label{eq228}
\log\log N<y(N)<2^{-32}\log N.
\end{equation}
We will need the following lemma:

\begin{lemma}
\label{lem2}
Write
$$
Z=\frac{\log x}{\log y}\log\left(1+\frac{y}{\log x}\right)+\frac{y}{\log y}\log\left(1+\frac{\log x}{y}\right).
$$
Then we have, uniformly for $x\geq y\geq 2$,
$$
\log\Psi(x,y)=Z\left(1+O\left(\frac{1}{\log y}+\frac{1}{\log\log 2x}\right)\right).
$$
\end{lemma}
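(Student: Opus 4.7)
The statement is the Hildebrand--Tenenbaum uniform asymptotic for $\Psi(x,y)$; in practice I would simply cite their theorem rather than reproduce the proof. If pressed for a self-contained argument, the standard route is the saddle-point method. The plan is to introduce
\[
\zeta(s,y):=\prod_{p\leq y}(1-p^{-s})^{-1}=\sum_{\substack{n\geq 1\\ p^+(n)\leq y}}n^{-s}
\]
and, starting from a truncated Perron formula
\[
\Psi(x,y)=\frac{1}{2\pi i}\int_{\alpha-iT}^{\alpha+iT}\zeta(s,y)\frac{x^s}{s}\,ds+\text{(error)},
\]
to choose $\alpha=\alpha(x,y)>0$ as the saddle point defined implicitly by
\[
\sum_{p\leq y}\frac{\log p}{p^\alpha-1}=\log x.
\]

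Next I would carry out a local expansion of $\log(\zeta(\alpha+it,y)x^{\alpha+it})$ around $t=0$: it is essentially quadratic in $t$, so a Gaussian estimate for the truncated integral yields $\Psi(x,y)\asymp x^\alpha\zeta(\alpha,y)/(\alpha\sqrt{\sigma_2})$ with $\sigma_2=\sum_{p\leq y}(\log p)^2 p^\alpha/(p^\alpha-1)^2$. Taking logarithms absorbs the factors $\alpha$ and $\sqrt{\sigma_2}$ into the error term and leaves
\[
\log\Psi(x,y)=\alpha\log x+\log\zeta(\alpha,y)+O(\log y+\log\log 2x).
\]

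The main remaining step is to show that $\alpha\log x+\log\zeta(\alpha,y)$ coincides with $Z$ up to the stated relative error $1+O(1/\log y+1/\log\log 2x)$. This is done by analysing the saddle-point equation separately in the two regimes $y\ll\log x$ (where $\alpha$ is small and $\log\zeta(\alpha,y)\sim\pi(y)\log((\log x)/y)$ dominates, producing the second summand of $Z$) and $y\gg\log x$ (where $\alpha\to 1$ and $\alpha\log x$ dominates, producing the first summand). The hard part, and the reason I would rather cite the result than reprove it, is making the saddle-point analysis and the associated Mertens-type estimates for $\sum_{p\leq y}(\log p)/(p^\alpha-1)$ uniform across the entire range $x\geq y\geq 2$, in particular through the transition $y\asymp\log x$ where neither summand of $Z$ is negligible.
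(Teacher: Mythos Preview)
Your proposal and the paper's proof agree in substance: the paper does not prove the lemma either, but simply cites it as de Bruijn's theorem (with a pointer to Tenenbaum's book for a proof and discussion). So your instinct to quote the result is exactly what the authors do.

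One correction of attribution: the statement as written is de Bruijn's 1966 estimate for $\log\Psi(x,y)$, not the Hildebrand--Tenenbaum theorem. The latter is the sharper saddle-point formula $\Psi(x,y)\sim x^{\alpha}\zeta(\alpha,y)/(\alpha\sqrt{2\pi\sigma_2})$ that you sketch, and it does imply de Bruijn's result after taking logarithms; but the specific form of $Z$ and the error term $1+O(1/\log y+1/\log\log 2x)$ are de Bruijn's. Your outline of the saddle-point method is thus a valid route to the lemma (indeed the one presented in Tenenbaum's book), though it is not de Bruijn's original argument. If you keep the sketch, cite de Bruijn for the statement and Hildebrand--Tenenbaum (or Tenenbaum's book) for the saddle-point proof.
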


\begin{proof}
This is de Bruijn's theorem \cite{br} (see also \cite{te} for the proof, background, and analysis of this formula).
\end{proof}

By \eqref{eq228} and Lemma \ref{lem2}, for $N$ large enough we have
\begin{multline}
\label{eq229}
\log\Psi(N,y(N))=Z\left(1+O\left(\frac{1}{\log y(N)}\right)\right)=\\
\left(\frac{\log N}{\log y(N)}\log\left(1+\frac{y(N)}{\log N}\right)+\frac{y(N)}{\log y(N)}\log\left(1+\frac{\log N}{y(N)}\right)\right)(1+o(1))>\\
>(1+o(1))\left(\frac{y(N)}{\log y(N)}\log(1+2^{32})\right).
\end{multline}

On the other hand, by \eqref{eq224}, \eqref{eq228} and the prime number theorem, for $N\to +\infty$ we have
\begin{multline*}
\log\Psi(N,y(N))<2\left(\log 3+\log 2^{8(2\pi(y)+2)}\right)=\\
=\log 9+2(2\pi(y)+2)\log 2^8=(1+o(1))\log 2^{32}\frac{y(N)}{\log y(N)}.
\end{multline*}
For $N$ large enough this contradicts \eqref{eq229} which completes the proof of Theorem \ref{thm1}.

\section{Proof of Theorem \ref{thm2}}
There are some similarities between the proofs of Theorems \ref{thm1} and \ref{thm2}, thus we will omit some details.

Assume that the conditions of Theorem \ref{thm2} hold, however, contrary to the statement of the theorem there are $\g_y'\subset\N$, $n_0\in\N$, $\A\subset\N$, $\B\subset\N$ such that
\begin{equation}
\label{eq31}
\g_y'\cap [n_0,+\infty)=\g_y\cap [n_0,+\infty),
\end{equation}
\eqref{eq22} holds, and
\begin{equation}
\label{eq32}
\g_y'=\A\cdot\B.
\end{equation}
Assume that $N\in\N$ satisfies \eqref{eq24}. Then it follows from \eqref{eq31} and \eqref{eq32} that
$$
\g_y\cap [n_0,N]=\g_y'\cap [n_0,N]\subset (\A\cap [0,N])\cdot (\B\cap [0,N])
$$
whence, by \eqref{eq16},
\begin{equation}
\label{eq33}
|\F_y\cap [n_0-1,N-1]|=|\g_y\cap [n_0,N]|\leq A(N)B(N).
\end{equation}
On the other hand, as in \eqref{eq26}, for $N\to +\infty$ we have
\begin{equation}
\label{eq34}
|\F_y\cap [n_0-1,N-1]|=(1+o(1))\F_y(0,N)=(1+o(1))\Psi(N,y(N)).
\end{equation}
By \eqref{eq33} and \eqref{eq34}, for every $N$ large enough we have
\begin{equation}
\label{eq35}
A(N)B(N)> \frac{1}{2} \Psi(N,y(N)).
\end{equation}

Now write $\A=\{a_1,a_2,\dots\}$ with $(0<)a_1<a_2<\dots$ and $\B=\{b_1,b_2,\dots\}$ with $(0<)b_1<b_2<\dots$, and define $m$ by $m=\max(a_2,b_2)$ (so that $m\geq 1$). We will show that there are infinitely many positive integers $D$ such that
\begin{equation}
\label{eq36}
A(mD)B(mD)< (m^2+1) A(D)B(D).
\end{equation}
Indeed, assume that contrary to this assertion there are only finitely many positive integers $D$ with this property. Then there exists a positive integer $D_0$ with 
\begin{equation}
\label{eq37}
A(D_0)B(D_0)>0
\end{equation}
such that for $D\in\N$, $D\geq D_0$ we have
$$
A(mD)B(mD)\geq (m^2+1) A(D)B(D).
$$
It follows from this by induction on $k$ that
\begin{equation}
\label{eq38}
A(m^kD_0)B(m^kD_0)\geq (m^2+1)^k A(D_0)B(D_0)\ \ \ \text{for}\ k=0,1,2,\dots .
\end{equation}
Clearly, we have
\begin{equation}
\label{eq39}
A(m^kD_0)B(m^kD_0)\leq m^kD_0\cdot m^k D_0=m^{2k}D_0^2 .
\end{equation}
We obtain from \eqref{eq38} and \eqref{eq39} that
$$
(m^2+1)^k A(D_0)B(D_0)\leq m^{2k}D_0^2
$$
whence
$$
\left(1+\frac{1}{m^2}\right)^k A(D_0)B(D_0)\leq D_0^2\ \ \ (\text{for}\ k=0,1,2,\dots).
$$
However, by \eqref{eq37}, this inequality cannot hold for $k$ large enough, and this contradiction proves the existence of infinitely many $D\in\N$ satisfying \eqref{eq36}.

Let $D$ be a positive integer satisfying \eqref{eq36} and large enough, and write $N=mD$. So far the sets $\A$ and $\B$ play symmetric roles thus we may assume that
\begin{equation}
\label{eq310}
B(D)\geq A(D).
\end{equation}
It follows from \eqref{eq35},\eqref{eq36} and \eqref{eq310} that
\begin{multline*}
\frac{1}{2} \Psi(N,y(N))<A(N)B(N)=A(mD)B(mD)<\\
<(m^2+1) A(D)B(D)\leq (m^2+1)(B(D))^2
\end{multline*}
whence, by $m\geq 1$,
\begin{equation}
\label{eq311}
B(D)\geq (2(m^2+1))^{-1/2} \left(\Psi(N,y(N))\right)^{1/2}\geq \frac{1}{2m}\left(\Psi(N,y(N))\right)^{1/2}.
\end{equation}
Now write
\begin{equation}
\label{eq312}
\tilde{\B}_N=\{b:b\in\B,\ n_0/a_1<b\leq N/a_2\}
\end{equation}
(note that $a_1\geq 1$ by $\A\subset\N$). Then by $\A\subset\N$, $N=mD$, the definition of $m$, and \eqref{eq311}, we have
\begin{multline}
\label{eq313}
|\tilde{\B}_N|=|\{b:b\in\B,\ n_0/a_1<b\leq N/a_2\}|=\\
=|\{b:b\in\B,\ b\leq N/a_2\}|-|\{b:b\in\B,\ b\leq n_0/a_1\}|\geq\\
\geq |\{b:b\in\B,\ b\leq N/m\}|-|\{b:b\in\B,\ b\leq n_0\}|=\\
=B(D)-B(n_0)\geq \frac{1}{2m} \left(\Psi(N,y(N))\right)^{1/2}-n_0 > \frac{1}{3m} \left(\Psi(N,y(N))\right)^{1/2}
\end{multline}
for $N$ large enough.

Consider now any
\begin{equation}
\label{eq314}
b\in\tilde{\B}_N
\end{equation}
and write
\begin{equation}
\label{eq315}
X_b=a_1b-1,\ \ \ Y_b=a_2b-1.
\end{equation}
Then we have
$$
a_2X_b-a_1Y_b=a_2(a_1b-1)-a_1(a_2b-1)=a_1-a_2,
$$
so that $X=X_b$, $Y=Y_b$ is a solution of the equation
\begin{equation}
\label{eq316}
\frac{a_2}{a_1-a_2} X-\frac{a_1}{a_1-a_2} Y=1.
\end{equation}
Moreover, by \eqref{eq312}, \eqref{eq314} and \eqref{eq315} we have
\begin{equation}
\label{eq317}
n_0\leq X_b=a_1b-1<Y_b=a_2b-1\leq N-1.
\end{equation}
It follows from \eqref{eq31} that
$$
a_1b\in\g_y'\ \ \ \text{and}\ \ \ a_2b\in\g_y',
$$
thus by \eqref{eq32} and \eqref{eq317} we also have
$$
a_1b\in\g_y\ \ \ \text{and}\ \ \ a_2b\in\g_y,
$$
so that by \eqref{eq16},
$$
X_b=a_1b-1\in\F_y\ \ \ \text{and}\ \ \ Y_b=a_2b-1\in\F_y.
$$
Thus $X_b$ and $Y_b$ satisfy \eqref{eq220} for the sets $\s$ and $\Z_\s^*$ defined by \eqref{eq213} and \eqref{eq211}, respectively. Then for every $b$ satisfying \eqref{eq314}, $X=X_b$, $Y=Y_b$ is a solution of the $S$-unit equation formed by \eqref{eq316} and $X,Y\in\Z_\s^*$ with this $\s,\Z_\s^*$, and clearly, if we start out from different $b$ values satisfying \eqref{eq314}, then we get different solutions $X=X_b$, $Y=Y_b$ of this equation. Thus by \eqref{eq313} the number $M$ of the solutions of this $S$-unit equation satisfies
\begin{equation}
\label{eq318}
M\geq |\tilde{\B}_N|> \frac{1}{3m} \left(\Psi(N,y(N))\right)^{1/2}.
\end{equation}
On the other hand, by Lemma \ref{lem1} the number of solutions must satisfy
\begin{equation}
\label{eq319}
M\leq 2^{8(2s+2)}=2^{8(2\pi(y)+2)}.
\end{equation}
It follows from \eqref{eq318} and \eqref{eq319} that
$$
\frac{1}{3m} \left(\Psi(N,y(N))\right)^{1/2}<2^{8(2\pi(y)+2)}.
$$
This is almost identical with inequality \eqref{eq224}, the only difference is that the constant factor $\frac{1}{3}$ on the left hand side of \eqref{eq224} is replaced here by $\frac{1}{3m}$ which is also independent of $N$, and thus it is easy to see that it leads to a contradiction in the same way as \eqref{eq224} did in Section 2.

\end{document}